\numberwithin{equation}{section}
\newtheorem{theorem}{Theorem}[section]
\newtheorem*{theorem*}{Theorem}
\newtheorem{proposition}[theorem]{Proposition}
\newtheorem*{proposition*}{Proposition}
\newtheorem{lemma}[theorem]{Lemma}
\theoremstyle{definition}
\newtheorem{definition}[theorem]{Definition}
\newtheorem{example}[theorem]{Example}
\newtheorem{remark}[theorem]{Remark}
\DeclareMathOperator{\End}{End}
\begin{document}

	\title{A Groupoid Proof of The Lefschetz fixed point formula}
	
	\author{Zelin Yi}

	\date{}

	\maketitle
	
	\begin{abstract}
	The purpose of this article is to present a "Groupoid proof" to the Lefschetz fixed point formula for elliptic complexes. We shall define a "relative version" of tangent groupoid, describe the corresponding pseudodifferential calculi and explain the relation with the Lefschetz fixed point formula.
	\end{abstract}
	
	\section{Introduction}
	 The notion of tangent groupoids is invented by Alain Connes\cite{Connes94} to simplify the proof of the Atiyah Singer index theorem(see also \cite{Higson10}). In \cite{HigsonYi19}, the authors construct a "rescaled vector bundle" over the tangent groupoid whose space of smooth sections supports a continuous family of supertraces with Getzler rescaling built in. The existence of such a continuous family of supertraces can be thought of as a version of the index theorem. In contrast, the Lefschetz fixed point formula requires a "rescaling" in a differential direction. In this paper,  instead of building a "rescaled bundle", we shall modify the tangent groupoid construction to encode the appropriate "Lefschetz type rescaling". 
	 
	 This paper grow out of an effort to apply \cite{HigsonYi19} to Bismut's hypoelliptic Laplacian \cite{Bismut11} where both Getzler rescaling and "Lefschetz type rescaling" are needed, the detail of which we plan to pursue elsewhere.
	
	To fix the notations, let us quickly go through the basic setting of the Lefschetz fixed point formula\cite{AtiyahBott66}. Let $V$ be a compact manifold, $E_0,E_1,\cdots ,E_N$ be a sequence of vector bundles over $V$. Let 
	\begin{equation}\label{elliptic-complex}
	0\to C^\infty(V,E_0) \xrightarrow{d_0} C^\infty(V,E_1) \xrightarrow{d_1} \cdots \xrightarrow{d_{N-1}} C^\infty(V,E_N) \to 0
	\end{equation}
	be a sequence of differential operators such that $d_{i+1}d_i=0$. By standard terminology, such a sequence is called a \emph{complex}. An \emph{Elliptic complex} over $V$ is a complex such that its sequence of principal symbols
	\[
	0\to C^\infty(T^\ast V,\pi^\ast E_0) \xrightarrow{\sigma(d_0)} C^\infty(T^\ast V,\pi^\ast E_1) \xrightarrow{\sigma(d_1)} \cdots \xrightarrow{\sigma(d_{N-1})} C^\infty(T^\ast V,\pi^\ast E_N) \to 0
	\]
	is exact outside the zero section of $T^\ast V$. Let 
	\begin{equation}\label{eq-vector-bundle-E}
	E=\bigoplus_i E_i
	\end{equation}
	be the direct sum vector bundle and 
	\begin{equation}\label{eq-sum-diff}
	d=\bigoplus_i d_i: C^\infty(V,E)\to C^\infty(V,E)
	\end{equation}
	be the corresponding direct sum of differential operators. It is convenient, for our purpose, to consider the $\mathbb{Z}/2\mathbb{Z}$-grading on $E=E^{\text{ev}}\oplus E^{\text{odd}}$ where $E^{\text{ev}}=\bigoplus_{i} E_{2i}$ and $E^{\text{odd}}=\bigoplus_{i} E_{2i+1}$. If $T: C^\infty(V,E) \to C^\infty(V,E)$ is an even smoothing operator, meaning that $T$ preserves the decomposition $C^\infty(V,E)=C^\infty(V,E^{\text{ev}})\oplus C^\infty(V,E^{\text{odd}})$ and  its Schwartz kernel belongs to $C^\infty(V\times V, E\boxtimes E^\ast)$, its supertrace $\operatorname{Str}(T)$ is defined as
	\begin{equation}
	\operatorname{Str}(T) = \operatorname{Tr}\Big(T: C^\infty(V,E^{\text{ev}}) \to C^\infty(V,E^{\text{ev}})\Big)-\operatorname{Tr}\Big(T: C^\infty(V,E^{\text{odd}})\to C^\infty(V,E^{\text{odd}})\Big)
	\end{equation}
	where $\operatorname{Tr}$ is the usual trace on smoothing operators(trace class).
		
	Let $\varphi:V \to V$ be a smooth map with only simple fixed points (meaning that $\left|\det(\varphi_{\ast,m}-1)\right|$ is invertible for all its fixed point $m\in V$). Assume in addition that there is a bundle map $\zeta: \varphi^\ast E\to E$ which induces the smooth map $\zeta: C^\infty(V,\varphi^\ast E)\to C^\infty(V,E)$. Consider the composition
	\[
	T: C^\infty(V,E) \xrightarrow{\varphi^\ast} C^\infty(V,\varphi^\ast E)\xrightarrow{\zeta} C^\infty(V,E).
	\]
	If $T=\zeta\circ \varphi^\ast$ commutes with the differential $d$ defined in \eqref{eq-sum-diff}, it lifts to an endomorphism on the cohomology ring
	\[
	T: H^\ast(E,d) \to H^\ast(E,d).
	\]
	The Lefschetz fixed point formula is about to calculate the alternating sum
	\begin{equation}\label{Lefschetz-number}
	\sum_{i=1}^m (-1)^{i}\operatorname{Tr}\left(T: H^i(E,d) \to H^i(E,d)\right).
	\end{equation}
	
	The Hodge decomposition theorem(see \cite[Chapter~1]{Gilkey84}) identifies the cohomology ring $H^\ast (E,d)$ with the kernel of Laplacian $\Delta=(d+d^\ast)^2$ as vector spaces. Under this light, the alternating sum \eqref{Lefschetz-number} can also be expressed as a supertrace
	\begin{equation}\label{Lefschetz-supertrace}
	\operatorname{Str}\left(Te^{-t\Delta}: C^\infty(V,E)\to C^\infty(V,E)\right).
	\end{equation}
	Thanks to the fact that $T$ commutes with $d$, they can be simultaneously diagonalized. Moreover, since each nonzero eigenspace of $\Delta$ in $C^\infty(V,E^{\text{ev}})$ can be identified with that in $C^\infty(V,E^{\text{odd}})$ by $d+d^\ast$, only zero eigenvalue contributes to \eqref{Lefschetz-supertrace}. The supertrace \eqref{Lefschetz-supertrace} is therefore independent of $t$. 
	
	The role of deformation spaces and groupoids is to help us justify and compute  the limit of \eqref{Lefschetz-supertrace} as $t\to 0$. To make it more precise, we shall build a groupoid  whose space of smooth functions is equipped with a continuous family of functionals $\operatorname{Str}_t$ parametrized by $t\in [0,1]$ and whose corresponding set of pseudodifferential operators contains $Te^{-t\Delta}$. We can recover \eqref{Lefschetz-supertrace} by applying $\operatorname{Str}_t$ to the integral kernel of $Te^{-t\Delta}$ and compute its value by setting $t=0$.
	
	\begin{theorem}{(The Lefschetz fixed point formula)}\label{thm-lef-fix}
	\[
	\operatorname{Str}\left(Te^{-t\Delta}: C^\infty(V,E)\to C^\infty(V,E)\right)=\sum_{m\in M}\left|\det(\varphi_{\ast,m}-1)\right|^{-1}\operatorname{str}(\zeta_m).
	\]
	\end{theorem}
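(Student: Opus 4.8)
The plan is to exploit the fact, already recorded in the discussion above, that $\operatorname{Str}(Te^{-t\Delta})$ is independent of $t$, and then to evaluate it in the limit $t\to 0$, where the heat operator degenerates and the whole computation becomes local. First I would write down the Schwartz kernel of $Te^{-t\Delta}$ along the diagonal. Writing $k_t(x,y)$ for the kernel of $e^{-t\Delta}$ and using $T=\zeta\circ\varphi^\ast$, the kernel of $Te^{-t\Delta}$ restricted to the diagonal is $\zeta_x\,k_t(\varphi(x),x)$, so that
\[
\operatorname{Str}(Te^{-t\Delta})=\int_V \operatorname{str}\bigl(\zeta_x\,k_t(\varphi(x),x)\bigr)\,dx.
\]
Because the heat kernel obeys a Gaussian off-diagonal bound $|k_t(\varphi(x),x)|\le C\,t^{-n/2}e^{-\operatorname{dist}(\varphi(x),x)^2/ct}$ with $n=\dim V$, the integrand is exponentially small away from the fixed-point set $M=\{m:\varphi(m)=m\}$, which is finite by the simple-fixed-point hypothesis $\det(\varphi_{\ast,m}-1)\neq 0$. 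Hence as $t\to 0$ the integral concentrates in arbitrarily small neighborhoods of the points of $M$, and it suffices to analyze one fixed point at a time.

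Rather than carry out the delicate heat asymptotics by hand, I would feed the family $t\mapsto Te^{-t\Delta}$ into the pseudodifferential calculus of the relative tangent groupoid, whose space of smooth functions carries the continuous family of functionals $\operatorname{Str}_t$, $t\in[0,1]$. By construction $\operatorname{Str}_t$ reproduces the operator supertrace for $t\in(0,1]$, so by continuity the $t$-independent quantity on the left equals $\lim_{t\to 0}\operatorname{Str}_t=\operatorname{Str}_0$ evaluated on the boundary fiber at $t=0$. The point of the "Lefschetz type rescaling" built into the groupoid is precisely that this boundary fiber splits as a disjoint union indexed by $M$: near each fixed point $m$ the fiber is modeled on the tangent space $T_mV$, on which $\varphi$ has been replaced by its linearization $\varphi_{\ast,m}$, the bundle map $\zeta$ by its value $\zeta_m$, and the heat kernel by the Euclidean heat kernel of the flat model Laplacian. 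Thus $\operatorname{Str}_0$ decomposes as a sum over $m\in M$ of purely linear contributions.

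The remaining task, which is the crux, is to identify each boundary contribution with $|\det(\varphi_{\ast,m}-1)|^{-1}\operatorname{str}(\zeta_m)$. Choosing linear coordinates $T_mV\cong\mathbb{R}^n$ and writing $A=\varphi_{\ast,m}$, the leading symbol of the model heat kernel is scalar and Gaussian, so the fiber supertrace factors as $\operatorname{str}(\zeta_m)$ times the Gaussian integral
\[
\int_{\mathbb{R}^n}(4\pi t)^{-n/2}e^{-|(A-1)x|^2/4t}\,dx=|\det(A-1)|^{-1},
\]
where the equality is the substitution $u=(A-1)x$, legitimate exactly because $A-1=\varphi_{\ast,m}-1$ is invertible at a simple fixed point, combined with the normalization $\int_{\mathbb{R}^n}(4\pi t)^{-n/2}e^{-|u|^2/4t}\,du=1$. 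Summing over $m\in M$ gives the asserted formula.

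I expect the main obstacle to be the second step rather than this final computation. One must show that inserting $Te^{-t\Delta}$ into the relative tangent groupoid's calculus is legitimate, i.e. that its rescaled kernel extends smoothly down to $t=0$, and that the $t=0$ fiber really is the linear model claimed, so that continuity of $\operatorname{Str}_t$ can be invoked to pass from the geometric family to the flat Gaussian integral. Concretely, the technical heart is to verify that the higher-order Taylor corrections to $\varphi$ and to the heat kernel near $m$ are absorbed into the groupoid's smooth structure and contribute nothing on the boundary fiber; this is what replaces the usual $O(\sqrt t)$ error bookkeeping in the classical McKean–Singer argument.
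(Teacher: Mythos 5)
Your overall strategy coincides with the paper's: exploit the $t$-independence of $\operatorname{Str}(Te^{-t\Delta})$, show that the rescaled heat kernel defines a smooth section over the relative tangent groupoid (this is Proposition~\ref{prop-smooth-family-diff}), invoke the continuity of the family $\operatorname{Str}_t$ (Proposition~\ref{prop-cont-supertraces}, whose uniform-boundedness hypothesis is supplied precisely by the $t$-independence you start from), and evaluate $\operatorname{Str}_0$ on the boundary fiber by the linear substitution $u=(\varphi_{\ast,m}-1)X$. Up to that point your outline and the paper's proof agree, including your correct identification of the smooth extension to $t=0$ as the technical heart.

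The gap is in your final fiberwise computation, and it is a gap of generality rather than of bookkeeping. Theorem~\ref{thm-lef-fix} concerns an arbitrary elliptic complex, so $\Delta=(d+d^\ast)^2$ is elliptic of order $2s$ with principal symbol $\sum_{|\alpha|=2s}a_\alpha(v)\xi^\alpha$, where the $a_\alpha(v)$ are positive definite \emph{matrices}. The model operator on the boundary fiber over a fixed point $m$ is $-\sum_{|\alpha|=2s}a_\alpha(m)\partial^\alpha/\partial X^\alpha$, whose kernel is the inverse Fourier transform of $\exp\bigl(-\sum_{|\alpha|=2s}a_\alpha(m)\xi^\alpha\bigr)$; for $s>1$, or for non-scalar $a_\alpha$, this is neither Gaussian nor a multiple of the identity endomorphism (it need not even be positive). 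Consequently two of your steps fail in general: the pointwise factorization of $\operatorname{str}\bigl(\zeta_m\,k(\varphi_\ast X,X)\bigr)$ into $\operatorname{str}(\zeta_m)$ times a scalar density, and the explicit Gaussian evaluation $\int_{\mathbb{R}^n}(4\pi t)^{-n/2}e^{-|(A-1)x|^2/4t}\,dx=|\det(A-1)|^{-1}$. (The Gaussian off-diagonal bound you use to localize is likewise special to $s=1$, and is in any case unnecessary: localization at $M$ is automatic from the deformation-space smooth structure once uniform boundedness holds.) The paper's remedy, which completes your outline, is to integrate first and normalize afterwards: the substitution $u=(\varphi_{\ast,m}-1)X$ extracts $|\det(\varphi_{\ast,m}-1)|^{-1}$, leaving $\operatorname{str}\bigl(\zeta_m\int_{T_mV}k(X,0)\,d\mu_m\bigr)$, and Lemma~\ref{lem-integral-heat-kernel} proves $\int_{\mathbb{R}^n}k_0(X,0)\,dX=1$ (the identity matrix) for every such model operator, using the Fourier-inversion fact that the total integral of an inverse Fourier transform equals the value of the original function at $\xi=0$. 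With that lemma in place of your Gaussian integral your argument becomes exactly the paper's; as written, it proves the theorem only for Laplace-type $\Delta$ of order two with scalar principal symbol, such as the de Rham complex.
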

	
	The correspondence between abstract groupoids and pseudodifferential calculus shall be reviewd in Section~\ref{groupoids-pseudo}. The groupoid that serves our purpose will be bulit in Section~\ref{deformation-space} and Section~\ref{coefficient-bundle}. The final calculation of \eqref{Lefschetz-supertrace} will be performed in Section~\ref{final-calculation}.


	\section{Groupoids and pseudodifferential operators}\label{groupoids-pseudo}
	In this section, we shall quickly review basics of groupoids and the correspondence between Lie groupoids and pseudodifferential operators established in \cite{NistorWeinsteinXu99}. For our purpose, we shall mainly focus on smoothing operators.
	
	A Lie groupoid, usually denoted as $G\rightrightarrows G^{(0)}$, consists of the following data:
	\begin{enumerate}
	\item Two smooth manifolds $G=G^{(1)}$ and $G^{(0)}$ with two submersions $s,r: G\to G^{(0)}$ called source and range maps of the groupoid.
	
	Roughly speaking, $G^{(0)}$ can be thought of as a set of points and $G^{(1)}$ as a set of arrows between those points. The source and target maps send an arrow to the initial and terminal points respectively;
	\item An associative multiplication map $m: G^{(2)} \to G$ where $G^{(2)}=\{(\gamma,\eta)\in G\times G \mid s(\gamma)=r(\eta)\}$. 
	
	Given two arrows, if the initial point of the first arrow coincides with the terminal point of the second one, they can be "connected" to produce a new arrow. This type of "connection" determines the multiplication map $m$; We shall write $\gamma \cdot \eta$ for $m(\gamma, \eta)$.
	
	\item A smooth map $\varepsilon: G^{(0)}\to G^{(1)}$.
	
	Given a point $x\in G^{(0)}$, it can be viewed as an arrow(loop) which goes from $x$ to $x$;
	
	\item A smooth map $\iota:G\to G$ such that
			\[
			\iota(\gamma)\circ \gamma = \varepsilon(s(\gamma)), \quad \gamma\circ\iota(\gamma)=\varepsilon(r(\gamma)) \quad \forall \gamma\in G.
			\]
			
		Given an arrow, the map $\iota$ reverse its direction. We shall write $\gamma^{-1}$ for $\iota(\gamma)$.
	\end{enumerate}
	
	Let $G\rightrightarrows G^{(0)}$ be a Lie groupoid, $E$ be a vector bundle over the unit space $G^{(0)}$. Let $G_x=s^{-1}(x)$ and $G^y=r^{-1}(y)$ be the corresponding source and range fibers where $x,y\in G^{(0)}$. Define $\mathbb{E}$ to be the tensor product vector bundle $r^\ast E \otimes s^\ast E^\ast \to G$. 
	
	\begin{definition}
	$\left(P_x,x\in G^{(0)}\right)$ is said to be \emph{a family of pseudodifferential operators} on $\mathbb{E}\to G$ if $P_x$ is a pseudodifferential operators acting on $C^\infty_c(G_x,r^\ast E)$ for all $x\in G^{(0)}$.
	\end{definition}
	
	Let $g\in G$, it defines a right translation $U_g: C^\infty(G_{s(g)},r^\ast E)\to C^\infty(G_{r(g)},r^\ast E)$ which is given by $(U_gf)(h)=f(hg)$.
	
	\begin{definition}\label{def-equi-family}
	$\left(P_x,x\in G^{(0)}\right)$ is called an \emph{equivariant} family of pseudodifferential operators if
	\begin{equation}\label{eq-equivariance}
	U_gP_{s(g)} = P_{r(g)}U_g
	\end{equation}
	for all $g\in G$.
	\end{definition}
	
	In the following discussion, let us assume that $\left(P_x,x\in G^{(0)}\right)$ consists of smoothing operators. That is, the Schwartz kernel $k_x$ of $P_x$ belongs to $C^\infty(G_x\times G_x,r^\ast E \boxtimes r^\ast E^\ast)$ for all $x\in G^{(0)}$. The importance of the equivariance is that it allows us to reduce the Schwartz kernel. Indeed, in terms of the Schwartz kernel, equation \eqref{eq-equivariance} read
	\[
	k_{r(g)}(h^\prime, h) = k_{s(g)}(h^\prime g, hg)
	\]
	which is then equivalent to 
	\[
	k_{s(g)}(g^\prime, g) = k_{r(g)}(g^\prime g^{-1},r(g)).
	\]
	
	\begin{definition}
	Let $\left(P_x,x\in G^{(0)}\right)$ be an equivariant family of smoothing operators. Its \emph{reduced kernel} $k_P$, which is a (not necessarily smooth) section of $\mathbb{E}\to G$, is defined to be
	\[
	k_P(g) = k_{s(g)}(g,s(g))
	\]
	where $k_x$ is the Schwartz kernel of $P_x$.
	\end{definition}
	According to the definition, $k_P(g) = k_{s(g)}(g,s(g))\in E_{r(g)}\otimes E^\ast_{s(g)}$. So, $k_p$ is indeed a section of $\mathbb{E}\to G$.
	
	\begin{definition}\label{def-smooth-family}
	The family $\left(P_x,x\in G^{(0)}\right)$ is said to be a \emph{smooth} if its reduced kernel $k_p$ is a smooth section.
	\end{definition}
	
	\begin{remark}
	In \cite{NistorWeinsteinXu99}, in order to discuss composition of pseudodifferential operators, the authors need some conditions on support. Since in this paper the composition formula is not necessary, we are able to get rid of the support conditions and consider larger class of pseudodifferential operators. 	\end{remark}

	\begin{example}\label{ex-pair-groupoid}
	Let $V$ be a smooth manifold and $G^{(1)}=V\times V\rightrightarrows V=G^{(0)}$ be the pair groupoid where the range and source maps are the projections onto the first and second variables respectively. The source fiber over $v\in V$ is $G_v=V\times \{v\}$. A family of pseudodifferential operators $\left(P_v,v\in V\right)$ on $G$ is simply a family of pseudodifferential operators $\{P_v\}_{v\in V}$ on $V$ parametrized by $v\in V$. If $\left(P_v,v\in V\right)$ is equivariant in the sense of Definition~\ref{def-equi-family}, then \eqref{eq-equivariance} read
	\[
	P_v U_{(v,v^\prime)} = U_{(v,v^\prime)}P_{v^\prime}.
	\]
	Notice that $U_{(v,v^\prime)}$ identifies $G_v$ with $G_{v^\prime}$. The above equation simply means $P_v=P_{v^\prime}$. In a word, equivariant family in this context is constant family. If $P_v$ is a smoothing operator, the reduced kernel $k_P$ is precisely the kernel of $P_v$ which belongs to $C^\infty(V\times V)$.
	\end{example}

	\section{Deformation space}\label{deformation-space}
	In this section, we briefly review the deformation to the normal cone construction. For a more detailed account see \cite{Mohsen19Witten,Debord17blowup}.
	
	To the embedding $M\subseteq V$, the associated deformation to the normal cone $\mathbb{N}_V M$ is a smooth manifold whose underlying set is 
	\[
	\mathbb{N}_V M= N_V M \sqcup V\times (0,1].
	\]
	Let $V\supseteq U \xrightarrow{\varphi} \mathbb{R}^n=\mathbb{R}^p\times \mathbb{R}^q$ be a local coordinate chart of $V$ such that $M\cap U = \varphi^{-1}\left(\varphi(U) \cap \mathbb{R}^p\times \{0\}\right)$. Then $\mathbb{N}_V M \supseteq \mathbb{N}_U (M\cap U) \xrightarrow{\phi} \mathbb{R}^{n+1}$ is seen as a local coordinate chart of the deformation space with the homeomorphism $\phi$ given by
	\begin{equation}\label{smooth-structure}
	\begin{split}
	(v,t) &\mapsto (\varphi_p(v),\frac{1}{t}\varphi_q(v),t);\\
	(X,m) &\mapsto (\varphi_{p,\ast}(X),\varphi_q(m),0),
	\end{split}
	\end{equation}
	where $\varphi_p$ and $\varphi_q$ are the first $p$ and last $q$ components of $\varphi$ respectively.
	
	Equivalently, the smooth structure is determined by declaring the following functions to be smooth:
	\begin{enumerate}
		\item If $f$ is a smooth functions on $V$ then the assignment $\mathbb{N}_V M \to \mathbb{R}$
		\begin{align*}
		(v,t) &\mapsto f(v) \\
		X_m &\mapsto f(m)
		\end{align*}
		where $X_m$ is a normal vector at $m\in M$, is a smooth function on $\mathbb{N}_V M$.
		\item If $f$ is a smooth functions on $V$ that vanishes to order $r$ on $M$, then the assignment $\mathbb{N}_V M \to \mathbb{R}$
		\begin{align*}
		(v,t) &\mapsto \frac{1}{t^r}f(v) \\
		X_m &\mapsto \frac{1}{r!}X_m^r(f)
		\end{align*}
		is a smooth function on $\mathbb{N}_V M$.
	\end{enumerate}
	
	The deformation to the normal cone construction is functorial in the following sense. Given a commutative diagram
	\begin{equation*}
	\xymatrix{
	V^\prime \ar[r]  & V \\
	M^\prime \ar[r]  \ar[u] &  M\ar[u]
	}
	\end{equation*}
	where the vertical maps are inclusion of submanifolds and the horizontal maps are any smooth maps, there is an induced map between deformation spaces $\mathbb{N}_V^\prime M^\prime \to \mathbb{N}_V M$. Moreover, if the horizontal maps in the above diagram are submersions, then the induces map between deformation spaces is also a submersion.

	The following result will be important to us.
	\begin{proposition}\label{Integral-deformation-space}
		Fix a smooth measure $\mu_V$ on $V$, it induces smooth measures $\mu_m$ on $T_m V/T_m M$ for each $m\in M$ and a smooth measure $\mu_M$ on $M$. Let $f\in C^\infty(\mathbb{N}_V M)$ and assume that
		\[
		t^{\operatorname{dim} M-\operatorname{dim} V}\int_V f(v,t)d\mu_V(v)< \infty
		\]
		is uniformly bounded with respect to $t\in (0,1]$. Then it converges, as $t\to 0$, to 
		\[
		\int_M \int_{T_m V/T_m M} f(X_m)d\mu_m(X_m)d\mu_M(m)<\infty.
		\]
	\end{proposition}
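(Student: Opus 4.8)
The plan is to reduce the statement to a local computation in the adapted charts of \eqref{smooth-structure}, where the rescaling $t^{\dim M-\dim V}=t^{-q}$ (writing $p=\dim M$ and $q=\dim V-\dim M$) is exactly absorbed by the change of variables defining the deformation space; the content of the proposition then becomes the legitimacy of interchanging the limit $t\to0$ with integration.

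First I would fix a partition of unity on $V$ subordinate to a cover by charts $\varphi\colon U\to\R^p\times\R^q$ of the type in \eqref{smooth-structure}, so that by linearity it suffices to analyse a single $f$ whose $v$-support lies in one such $U$. Writing the coordinates as $(x,y)\in\R^p\times\R^q$ and $\mu_V=\rho(x,y)\,dx\,dy$ in this chart, the description \eqref{smooth-structure} says that on the region $t>0$ one has $f(v,t)=F\bigl(x,\tfrac1t y,t\bigr)$ for a single function $F(a,b,t)$ that is smooth up to and including $t=0$, with $F(a,b,0)$ equal to the value of $f$ at the normal vector $X_m$ determined by $\varphi_p(m)=a$ and $\varphi_{q,\ast}(X_m)=b$.

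Next I would carry out, for each fixed $t>0$, the substitution $b=y/t$ in the fibre integral. Since $dy=t^{q}\,db$, the factor $t^{-q}$ is cancelled and one obtains
\[
t^{-q}\int_U f(v,t)\,d\mu_V(v)=\int F(a,b,t)\,\rho(a,tb)\,da\,db,
\]
where the $b$-integration runs over the dilating region $\{(a,b):(a,tb)\in\varphi(U)\}$. As $t\to0$ the integrand converges pointwise to $F(a,b,0)\,\rho(a,0)$, and this limiting density is exactly the integrand of the right-hand side: under the identifications $a=\varphi_p(m)$ and $b=\varphi_{q,\ast}(X_m)$ one has $\rho(a,0)\,da=d\mu_M(m)$ and $db=d\mu_m(X_m)$, these being the very measures that $\mu_V$ induces through the canonical splitting $\operatorname{Dens}(T_mV)\cong\operatorname{Dens}(T_mM)\otimes\operatorname{Dens}(T_mV/T_mM)$. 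In this way the whole identity is reduced to showing that the integral of the pointwise limit equals the limit of the integrals.

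This last interchange is the main obstacle, and it is here that the uniform boundedness hypothesis is indispensable. The difficulty is uniform in nature: both the tail $|b|>R$ of each chart and the charts whose domain is disjoint from $M$ correspond to mass of the rescaled measure $t^{-q}\mu_V$ escaping towards the ends of the fibres of $\mathbb{N}_VM\to[0,1]$, and smoothness of $f$ alone gives no decay there. I would therefore view $\{t^{-q}\mu_V\}_{t>0}$ as a smooth family of fibrewise measures on $\mathbb{N}_VM\to[0,1]$ whose $t=0$ member is $d\mu_m(X_m)\,d\mu_M(m)$ on $N_VM$, splitting each chart integral into a compact part $|b|\le R$, where dominated convergence applies directly, and an escaping part. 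I would then use the global hypothesis — via Fatou's lemma to secure finiteness of the right-hand side, and via a uniform-integrability (tightness) estimate extracted from the uniform bound to discard the escaping mass for large $R$ — to upgrade the chartwise pointwise convergence to convergence of the integrals. Reassembling the partition of unity and letting $R\to\infty$ after $t\to0$ then yields the stated limit and, simultaneously, the finiteness of the right-hand side.
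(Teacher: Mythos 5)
Your chart reduction, the substitution $b=y/t$, and the identification of the limiting integrand with $d\mu_m\,d\mu_M$ are precisely the paper's own proof (the paper performs the same local computation, with the density $\rho$ suppressed). Where you diverge is in refusing to pass the limit under the integral for free: the paper's final line is exactly that unjustified interchange, and you are right that it is the entire content of the statement. However, the device you propose to close it --- ``a uniform-integrability (tightness) estimate extracted from the uniform bound'' --- does not exist, because the hypothesis genuinely does not imply tightness. Write $q=\operatorname{dim}V-\operatorname{dim}M$, let $\pi\colon\mathbb{N}_VM\to V$ be the canonical map, pick a nonnegative $g\in C^\infty(V)$ supported in $V\setminus M$ with $\int_V g\,d\mu_V=1$, and set $f=t^{q}\,\pi^{\ast}g$. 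Then $f$ is smooth on $\mathbb{N}_VM$ (both $t$ and pullbacks of smooth functions on $V$ are smooth there) and vanishes identically on the fibre $t=0$, so the claimed limit value is $0$; yet
\[
t^{-q}\int_V f(v,t)\,d\mu_V(v)=\int_V g\,d\mu_V=1
\qquad\text{for every } t\in(0,1],
\]
so the hypothesis holds with the best possible uniformity while the actual limit is $1$. In your rescaled coordinates all the mass of this $f$ sits at $|b|\sim 1/t$: it is exactly the escaping mass you worried about, and nothing in the hypothesis excludes it.

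So the gap is not a missing trick in your argument but the statement itself: as written, the proposition is false, and no refinement of your final step can rescue it. (The same example also shows that your first step is not free either: the uniform bound on the total integral does not pass to the pieces $\chi_i f$ of a partition of unity.) What is true, and what your $|b|\le R$ splitting plus dominated convergence proves essentially verbatim, is the proposition with an added decay hypothesis transverse to $M$ --- for instance that in the charts \eqref{smooth-structure} one has $|F(a,b,t)|\le C_N(1+|b|)^{-N}$ uniformly in $t$, or any integrable dominating function in $b$. The heat kernels to which the paper later applies this proposition do satisfy such Gaussian bounds, so the application can be repaired; but both your proof and the paper's --- which conceals the identical gap in its last, unjustified limit --- require this stronger hypothesis to be completed.
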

	
	\begin{proof}
	Let $U$ be an open subset of $V$. According to the smooth structure \eqref{smooth-structure} of the deformation space, $\mathbb{N}_U (M\cap U)$ is a local coordinate chart of $\mathbb{N}_V M$. Without loss of generality, we may assume that $f$ is supported inside the coordinate chart $\mathbb{N}_U (M\cap U)$. Then the result follows from a local calculation:
	\begin{align*}
	t^{\operatorname{dim} M-\operatorname{dim} V}\int_V f(v,t) d\mu_V(v) &= t^{-q}\int_{\mathbb{R}^n} f(v_1,\cdots,v_{p},tv_{p+1},\cdots,tv_{p+q},t) d\mu(v) \\
	&=\int_{\mathbb{R}^n} f(v_1,\cdots,v_{p+q},t) d\mu(v) \\
	&\to \int_{\mathbb{R}^n} f(v_1,\cdots,v_{p+q},0) d\mu(v).
	\end{align*}
	\end{proof}

	Consider the diagonal embedding $M\hookrightarrow V\times V$ and its associated deformation to the normal cone $\mathbb{N}_{V^2} M$. Thanks to the functoriality of the deformation construction, $\mathbb{N}_{V^2} M$ is a Lie groupoid with the unit space being $\mathbb{N}_V M$:
	\[
	\mathbb{N}_{V^2} M \rightrightarrows \mathbb{N}_VM.
	\]
The range and source maps are induced from the the commutative diagrams
	\begin{align*}
	\xymatrix{
	V\times V \ar[r]^{\pi_1} & V\\
	M\ar[r] \ar[u] & M \ar[u]
	}
	&
	\,
	&
	\xymatrix{
	V\times V \ar[r]^{\pi_2} & V\\
	M\ar[r] \ar[u] & M \ar[u]
	}
	\end{align*}
	where in both diagrams the left vertical maps are the diagonal embeddings, the right vertical maps are the embeddings of $M$ into $V$, the lower horizontal maps are the identity map and $\pi_1, \pi_2$ are the projections onto the first and second factors respectively. 
	
	To emphasis the groupoid structure and hint the similarity to the tangent groupoid, we shall write $\mathbb{T}_M V$ for $\mathbb{N}_{V^2} M$ and call it \emph{the relative tangent groupoid}. 
	
	\begin{definition}\label{def-traces}
		Let $k$ be a smooth function on the relative tangent groupoid $\mathbb{T}_M V$. We shall use $\operatorname{Tr}_t(k)$ to denote the following quantity
		\[
		\operatorname{Tr}_t(k)=\int_V k(v,v,t) d\mu_V(v).
		\]
		for all $t\neq 0$. And define
		\[
		\operatorname{Tr}_0(k)=\int_M\int_{T_m V/T_m M} k(X_m,X_m)d\mu_m(X_m) d\mu_M(m).
		\]
	\end{definition}
	
	
	Let $\varphi: V\to V$ be a smooth map with only simple fixed points and let $M$ be the set of its fixed points. It is straightforward to see that 
	\[
	\mathbb{T}_M V = \sqcup_{m\in M} T_m V\oplus T_m V \sqcup V\times V\times (0,1].
	\]
	The source map sends $(v_1,v_2,t)$ to $(v_2,t)$ when $t\neq 0$ and sends $(X_m,Y_m)\in T_mV\oplus T_m V$ to $Y_m$. The range map sends $(v_1,v_2,t)$ to $(v_1,t)$ when $t\neq 0$ and sends $(X_m,Y_m)\in T_mV\oplus T_m V$ to $X_m$.
	
	The following proposition is a direct consequence of Proposition~\ref{Integral-deformation-space}.
	\begin{proposition}\label{prop-cont-traces}
	Let $n=\operatorname{dim} V$, if $k\in C^\infty(\mathbb{T}_MV)$ such that $$f(t)=t^{-n}\operatorname{Tr}_t(k)$$ is uniformly bounded with respect to $t\in (0,1]$. Then $f(t)$ converges to $\operatorname{Tr}_0(k)$ as $t\to 0$.
	\end{proposition}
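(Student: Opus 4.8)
The plan is to reduce Proposition~\ref{prop-cont-traces} directly to Proposition~\ref{Integral-deformation-space} by applying the latter to the diagonal submanifold $M \hookrightarrow V \times V$. The key observation is that the quantity $\operatorname{Tr}_t(k)$ from Definition~\ref{def-traces} is precisely an integral over $V$ of the restriction of $k$ to the diagonal, while $\operatorname{Tr}_0(k)$ is an integral over the fibers $T_m V / T_m M$ together with $M$. Since $M$ here is the fixed-point set sitting diagonally inside $V \times V$, the normal bundle $N_{V^2} M$ has fiber over $m$ equal to $(T_m V \oplus T_m V)/T_m M$, where $T_m M$ is embedded diagonally; thus $T_m(V \times V)/T_m M \cong T_m V$, and the fiber dimension is exactly $n = \operatorname{dim} V$. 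This matches the codimension $\operatorname{dim}(V\times V) - \operatorname{dim} M = 2n - n = n$ that governs the rescaling exponent.

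First I would set up the ambient objects for Proposition~\ref{Integral-deformation-space}: take the ambient manifold to be $V \times V$ with submanifold the diagonal copy of $M$, and fix the smooth measure $\mu_{V \times V} = \mu_V \times \mu_V$. I would then restrict the given smooth function $k \in C^\infty(\mathbb{T}_M V) = C^\infty(\mathbb{N}_{V^2} M)$ to the appropriate diagonal locus. Concretely, $\operatorname{Tr}_t(k) = \int_V k(v,v,t)\, d\mu_V(v)$ is the integral of $k$ over the diagonal $\{(v,v)\} \subseteq V \times V$ at parameter $t$, not the full integral over $V \times V$ that appears in Proposition~\ref{Integral-deformation-space}. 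This is the one genuine discrepancy to address, and handling it cleanly is the main point of the argument.

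The hard part will be reconciling the diagonal integral $\int_V k(v,v,t)\,d\mu_V$ with the full integral $\int_{V\times V} f\, d\mu_{V\times V}$ demanded by Proposition~\ref{Integral-deformation-space}. I would resolve this by composing $k$ with the diagonal map rather than applying Proposition~\ref{Integral-deformation-space} to $V \times V$ directly. Consider instead the embedding $M \hookrightarrow V$ of the fixed-point set into $V$ itself, and form the smooth function $\tilde{k}$ on $\mathbb{N}_V M$ obtained by pulling $k$ back along the diagonal deformation map $\mathbb{N}_V M \to \mathbb{N}_{V^2} M$ induced by the functoriality of the construction applied to the diagonal $V \hookrightarrow V\times V$ (with $M \hookrightarrow M$ the identity on the base). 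By the local coordinate description \eqref{smooth-structure} this pullback is smooth, and on the $t \neq 0$ part it sends $(v,t) \mapsto k(v,v,t)$, while on the $t=0$ part a normal vector $X_m \in T_m V / T_m M$ maps to the pair $(X_m, X_m)$ lying in the diagonal of the fiber $T_m V \oplus T_m V$.

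With $\tilde{k}$ in hand, the proof is immediate: $\operatorname{Tr}_t(k) = \int_V \tilde{k}(v,t)\, d\mu_V(v)$ for $t \neq 0$, and the dimension count gives $\operatorname{dim} M - \operatorname{dim} V = -n$, so that $f(t) = t^{-n}\operatorname{Tr}_t(k)$ is exactly the rescaled integral $t^{\operatorname{dim} M - \operatorname{dim} V}\int_V \tilde{k}(v,t)\,d\mu_V$ appearing in Proposition~\ref{Integral-deformation-space}. The uniform boundedness hypothesis transfers verbatim. Proposition~\ref{Integral-deformation-space} then yields convergence as $t \to 0$ to $\int_M \int_{T_m V/T_m M} \tilde{k}(X_m)\, d\mu_m(X_m)\, d\mu_M(m)$, and since $\tilde{k}(X_m) = k(X_m, X_m)$ by the construction of the pullback, this limit is precisely $\operatorname{Tr}_0(k)$ as defined in Definition~\ref{def-traces}. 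I would therefore expect the only technical care needed to be the verification that the diagonal deformation map behaves as claimed on the $t=0$ fiber, which follows from the explicit chart \eqref{smooth-structure}.
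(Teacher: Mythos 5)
Your core argument is correct and is essentially the paper's own: the paper gives no proof beyond declaring Proposition~\ref{prop-cont-traces} to be ``a direct consequence of Proposition~\ref{Integral-deformation-space}'', and the diagonal pullback you construct --- functoriality applied to the diagonal $V \to V\times V$ over the identity of $M$, giving $\tilde{k}(v,t) = k(v,v,t)$ and $\tilde{k}(X_m) = k(X_m,X_m)$, followed by an application of Proposition~\ref{Integral-deformation-space} with ambient manifold $V$ --- is precisely the reduction that makes that declaration true.

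One point needs fixing, even though it does not enter your actual argument. Because the fixed points are simple, $M$ is a discrete ($0$-dimensional) submanifold, so $T_m M = 0$; the normal fiber of the diagonal copy of $M$ in $V\times V$ is therefore $(T_m V \oplus T_m V)/T_m M = T_m V \oplus T_m V$, of dimension $2n$, and the codimension of $M$ in $V\times V$ is $2n$, not $2n - n = n$ as your first paragraph claims. The identification $T_m(V\times V)/T_m M \cong T_m V$ there conflates $M$ with the full diagonal $\Delta(V)\subset V\times V$ (whose normal bundle is indeed isomorphic to $TV$). This matters conceptually: a direct application of Proposition~\ref{Integral-deformation-space} to the ambient manifold $V\times V$ would produce the exponent $t^{-2n}$, not $t^{-n}$; the exponent $-n$ arises exactly from your later, correct, reduction to the ambient manifold $V$, where $\dim M - \dim V = 0 - n = -n$. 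Relatedly, your statement that ``the dimension count gives $\dim M - \dim V = -n$'' silently uses $\dim M = 0$, which contradicts the $\dim M = n$ implicit in the first paragraph; you should say explicitly that simplicity of the fixed points forces $M$ to be discrete, which is what makes the stated exponent $t^{-n}$ (and hence the proposition as formulated) correct.
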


	If $k\in C^\infty(\mathbb{T}_M V)$, then $k_\varphi(x,y)=k(\varphi(x),y)$ is still a smooth function on the relative tangent groupoid $\mathbb{T}_M V$. Its valued at $(X,Y)\in T_mV\oplus T_mV$ can be computed
	\[
	k_\varphi(X,Y) = k(\varphi_\ast(X),Y)
	\]
	where $\varphi_{\ast,m}: T_mV \to T_mV$ is the differential of $\varphi$ at $m\in M$. Assume, in addition, that $k$ satisfies 
	\begin{equation}
	k(X_m,Y_m)=k(X_m-Y_m,0)
	\end{equation}
	which is saying that the pseudodifferential operator, of which $k$ is the Schwartz kernel, is translation invariant. Then
	\begin{equation}
	\begin{split}
	\operatorname{Tr}_0(k_\varphi)&=\sum_{m\in M} \int_{T_m V} k((\varphi_\ast-1)X_m,0)d\mu_m(X_m) \\
	&= \sum_{m\in M}\int_{T_m V} k(X_m,0)\left|\det(\varphi_{\ast,m}-1)\right|^{-1}d\mu_m(X_m)\\
	&= \sum_{m\in M}\left|\det(\varphi_{\ast,m}-1)\right|^{-1}\int_{T_m V}k(X_m,0)d\mu_m(X_m).
	\end{split}
	\end{equation}

	\section{The coefficient bundle}\label{coefficient-bundle}
	In this section, we shall build a vector bundle over the relative tangent groupoid $\mathbb{T}_M V$ to account for vector bundles appeared in the elliptic complex \eqref{elliptic-complex}.
	
	For the deformation space $\mathbb{N}_V M$, there is a canonical map $\mathbb{N}_V M \to V$ which sends $(v,\lambda)\in V\times (0,1]$ to $v$ and $X_m\in T_mV/T_mM$ to $m\in M\subset V$. Let $F$ be the pullback of the vector bundle $E$ defined in \eqref{eq-vector-bundle-E} along this canonical map and let $\mathbb{E}$ be the tensor product $r^\ast F \otimes s^\ast F^\ast \to \mathbb{T}_M V$. It is easy to check that
	\[
	\mathbb{E}_{(v_1,v_2,t)} \cong E_{v_1}\otimes E^\ast_{v_2}, \qquad \text{and}\qquad \mathbb{E}_{(X_m,Y_m)}\cong \End(E_m)
	\]
where $(v_1,v_2,t)\in V\times V\times (0,1]$ and $(X_m,Y_m)\in T_mV \times T_m V$.

	The following Definition and Proposition are supertrace version of Definition~\ref{def-traces} and Proposition~\ref{prop-cont-traces}:

	\begin{definition}
		If $k$ is a smooth section of $\mathbb{E} \to \mathbb{T}_M V$. We shall use $\operatorname{Str}_t(k)$ to denote the following quantity
		\[
		\operatorname{Str}_t(k)=\int_V \operatorname{str}\left(k(v,v,t)\right) d\mu_V(v).
		\]
		for all $t\neq 0$. And define
		\[
		\operatorname{Str}_0(k)=\sum_{m\in M}\int_{T_m V}  \operatorname{str}\left(k(X_m,X_m)\right)d\mu_m(X_m)
		\]
		where $ \operatorname{str}: \End(E_v)\to \mathbb{C}$ is the supertrace which reflects the $\mathbb{Z}/2\mathbb{Z}$-grading of $E_v$.
	\end{definition}

	\begin{proposition}\label{prop-cont-supertraces}
	Let $n=\operatorname{dim} V$, if $k\in C^\infty(\mathbb{T}_MV)$ such that $$f(t)=t^{-n}\operatorname{Str}_t(k)$$ is uniformly bounded with respect to $t\in (0,1]$. Then $f(t)$ converges to $\operatorname{Str}_0(k)$ as $t\to 0$.
	\end{proposition}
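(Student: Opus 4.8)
The plan is to reduce the statement to the scalar integration result of Proposition~\ref{Integral-deformation-space} (equivalently Proposition~\ref{prop-cont-traces}) by converting the section $k$ of $\mathbb{E}\to \mathbb{T}_M V$ into a genuine scalar function on the smaller deformation space $\mathbb{N}_V M$. The mechanism is to restrict to the diagonal and then apply the fibrewise supertrace. Concretely, the diagonal embedding $V\hookrightarrow V\times V$, together with the identity $M\to M$ on the fixed-point set, fits into the commutative square demanded by the functoriality of the deformation to the normal cone, and therefore induces a smooth map $\Delta:\mathbb{N}_V M\to \mathbb{N}_{V^2} M=\mathbb{T}_M V$. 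On the part $t\neq 0$ this map sends $(v,t)$ to $(v,v,t)$, while at $t=0$ it sends $X_m\in T_mV$ to $(X_m,X_m)\in T_mV\oplus T_mV$, since the induced map on normal cones is the differential of the diagonal.

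First I would check that the pulled-back bundle $\Delta^\ast \mathbb{E}$ is the endomorphism bundle $\End(F)$ over $\mathbb{N}_V M$, where $F$ is the pullback of $E$ along the canonical map $\mathbb{N}_V M\to V$. This is immediate once one observes that both the range and the source maps of $\mathbb{T}_M V$ restrict along $\Delta$ to the identity of $\mathbb{N}_V M$, so that
\[
\Delta^\ast\mathbb{E}=\Delta^\ast(r^\ast F\otimes s^\ast F^\ast)=F\otimes F^\ast=\End(F).
\]
Because the $\mathbb{Z}/2\mathbb{Z}$-grading of $E$ pulls back to one on $F$, the fibrewise supertrace is a smooth bundle map $\operatorname{str}:\End(F)\to\mathbb{C}$. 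Applying it to the smooth section $\Delta^\ast k$ produces a smooth scalar function
\[
g:=\operatorname{str}(\Delta^\ast k)\in C^\infty(\mathbb{N}_V M),
\]
which by construction satisfies $g(v,t)=\operatorname{str}(k(v,v,t))$ for $t\neq 0$ and $g(X_m)=\operatorname{str}(k(X_m,X_m))$ at $t=0$.

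With $g$ in hand the statement follows by direct citation. Since the fixed points are simple they are isolated, so $M$ is a finite set and $\operatorname{dim} M=0$; thus $\operatorname{dim} M-\operatorname{dim} V=-n$, the integral over $M$ in Proposition~\ref{Integral-deformation-space} reduces to a finite sum over $m\in M$, and $T_mV/T_mM=T_mV$. For $t\neq 0$ one has $\int_V g(v,t)\,d\mu_V(v)=\operatorname{Str}_t(k)$, so the hypothesis that $f(t)=t^{-n}\operatorname{Str}_t(k)$ be uniformly bounded is exactly the hypothesis of Proposition~\ref{Integral-deformation-space} applied to $g$. That proposition then yields convergence of $f(t)$, as $t\to 0$, to
\[
\sum_{m\in M}\int_{T_mV}g(X_m)\,d\mu_m(X_m)=\sum_{m\in M}\int_{T_mV}\operatorname{str}(k(X_m,X_m))\,d\mu_m(X_m)=\operatorname{Str}_0(k),
\]
which is the desired conclusion.

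The only point requiring genuine care, and the step I expect to be the main obstacle, is the smoothness of $g$ across $t=0$. Everything downstream is a verbatim application of the scalar result, but the assertion that $\operatorname{str}(k(v,v,t))$ extends smoothly over the cone face hinges on two facts that must be verified rather than assumed: that the diagonal induces a smooth map $\Delta$ of deformation spaces (functoriality), and that the identification $\Delta^\ast\mathbb{E}\cong\End(F)$ is itself smooth, so that the fibrewise supertrace delivers a smooth scalar function. Once these are established the grading plays no further role, and the supertrace statement is formally identical to its scalar counterpart.
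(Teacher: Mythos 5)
Your proof is correct and takes essentially the same route as the paper: Proposition~\ref{prop-cont-supertraces} is stated there without a separate proof, as the ``supertrace version'' of Proposition~\ref{prop-cont-traces}, i.e.\ as a direct consequence of the scalar result Proposition~\ref{Integral-deformation-space}, which is exactly the reduction you carry out. You merely make explicit the details the paper leaves implicit --- the smooth diagonal map $\Delta:\mathbb{N}_V M\to\mathbb{T}_M V$ from functoriality, the identification $\Delta^\ast\mathbb{E}\cong\End(F)$, and the smoothness of the fibrewise supertrace --- so that $\operatorname{str}(k(v,v,t))$ is a smooth function on $\mathbb{N}_V M$ to which the scalar proposition applies.
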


	Now, if $\Delta=(d+d^\ast)^2: C^\infty(V,E) \to  C^\infty(V,E)$ is an elliptic differential operator of order $2s$ with principal symbol 
	\begin{equation}\label{principal-symbol}
	\sigma(\Delta)(v,\xi) = \sum_{|\alpha|=2s}a_\alpha(v) \xi^\alpha
	\end{equation}
	where $a_\alpha(v)$ are positive definite matrices in $\End(E_v)$.
	It is well known that the heat kernel $e^{-t^{2s}\Delta}$ is a smoothing operator acting on $C^\infty_c(V,E)$. Let $X$ be the coordinates on the tangent space $T_vV$, then the heat kernel 
	\[
	\exp\left({-\sum_{|\alpha|=2s}a_\alpha(v) \frac{\partial^\alpha}{\partial X^\alpha}}\right)
	\]
	is a smoothing operator acting on $C^\infty_c(T_vV, E_v)$. Let $\left(Q_x,x\in G^{(0)}\right)$ be the family of smoothing operators on the relative tangent groupoid $\mathbb{T}_M V$ which is defined by
	\begin{equation}\label{eq-family-heat-kernel}
	Q_x=
	\begin{cases}
	\exp\left({-t^{2s}\Delta}\right) & x=(v,t)\in V\times (0,1] \\
	\exp\left({-\sum_{|\alpha|=2s}a_\alpha(m) \frac{\partial^\alpha}{\partial X^\alpha}}\right) & x=X_m\in T_m V.
	\end{cases}
	\end{equation}
	
	\begin{proposition}\label{prop-smooth-family-diff}
	The family $\left(Q_x,x\in G^{(0)}\right)$ is equivariant and smooth in the sense of definition~\ref{def-equi-family} and \ref{def-smooth-family}.
	\end{proposition}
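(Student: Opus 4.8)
The plan is to prove the two assertions separately: equivariance by an elementary fibrewise argument, and smoothness by recognising $Q$ as the heat semigroup of a rescaled Laplacian, so that the real analytic content is isolated in a single limit.

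\emph{Equivariance.} First I would note that the source and range maps of $\mathbb{T}_M V$ preserve the deformation parameter $t$, so that $\mathbb{T}_M V$ is a bundle of groupoids over $[0,1]$: its fibre over $t\neq 0$ is the pair groupoid $V\times V\rightrightarrows V$, while its fibre over $t=0$ is the disjoint union $\sqcup_{m\in M}(T_mV\times T_mV)\rightrightarrows \sqcup_m T_mV$ of the pair groupoids of the vector spaces $T_mV$. Since no arrow connects different values of $t$, the relation \eqref{eq-equivariance} may be checked one value of $t$ at a time. By the very form of \eqref{eq-family-heat-kernel}, $Q_x$ depends only on $t$: for $t\neq 0$ it is $e^{-t^{2s}\Delta}$ independently of the base point $v$, and for $t=0$ it is the constant–coefficient model operator attached to $m$, independently of $Y_m$. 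Thus on each fibre $Q$ is a constant family over a pair groupoid, and by the computation of Example~\ref{ex-pair-groupoid} a constant family over a pair groupoid is precisely an equivariant one; this yields \eqref{eq-equivariance} for all $g$.

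\emph{Reduction of smoothness.} Next I would compute the reduced kernel using the continuous Haar system of the deformation groupoid, for which the measure on the source fibre $G_{(v,t)}\cong V$ is $t^{-n}\mu_V$ and that on $G_{Y_m}\cong T_mV$ is $\mu_m$ (this normalisation is the one making the family of measures continuous across $t=0$, cf.\ Proposition~\ref{Integral-deformation-space}). The Schwartz kernels of \eqref{eq-family-heat-kernel} then read
\[
k_Q(v_1,v_2,t)=t^{n}K_{t^{2s}}(v_1,v_2)\quad(t\neq 0),\qquad k_Q(X_m,Y_m)=\mathcal{K}_m(X_m-Y_m),
\]
where $K_\tau$ is the heat kernel of $e^{-\tau\Delta}$ and $\mathcal{K}_m$ is the Euclidean heat kernel at time $1$ of the model operator $\sum_{|\alpha|=2s}a_\alpha(m)\partial_X^\alpha$ from \eqref{principal-symbol}. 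Away from $t=0$ this is jointly smooth by parabolic regularity, so the whole problem is the extension across $t=0$. In the deformation coordinates \eqref{smooth-structure} attached to a fixed point $m$ (with $M$ zero–dimensional, a point of $V\times V$ near $(m,m)$ acquires coordinates $v_1=t\xi$, $v_2=t\eta$ with $\xi,\eta\in\R^n$), smoothness of $k_Q$ amounts to the statement that $(\xi,\eta,t)\mapsto t^{n}K_{t^{2s}}(t\xi,t\eta)$ extends smoothly to $t=0$ with limiting value $\mathcal{K}_m(\xi-\eta)$. The homogeneity $\mathcal{K}_\tau(Z)=\tau^{-n/(2s)}\mathcal{K}_1(\tau^{-1/(2s)}Z)$ of the model kernel already matches the leading behaviour, so the substance is that \emph{all} $(\xi,\eta,t)$–derivatives converge.

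\emph{The rescaled Laplacian.} To organise this I would realise $Q$ as an exponential. Writing $\Delta=\sum_{|\alpha|\le 2s}b_\alpha(v)\partial_v^\alpha$ and expressing $t^{2s}\Delta$ in the rescaled fibre variable $v=t\xi$ produces $\mathcal{D}_t=\sum_{|\alpha|\le 2s}t^{2s-|\alpha|}b_\alpha(t\xi)\partial_\xi^\alpha$. Each coefficient $t^{2s-|\alpha|}b_\alpha(t\xi)$ is a smooth function on $\mathbb{N}_VM$: the factor $b_\alpha(t\xi)$ is the pull–back of a smooth function on $V$, while $t^{2s-|\alpha|}$ is smooth and, since $2s-|\alpha|\ge 0$, vanishes at $t=0$ unless $|\alpha|=2s$. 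Hence $(\mathcal{D}_x)$ is a smooth, equivariant family of elliptic operators of order $2s$ on $\mathbb{T}_MV$ (equivariant for the same fibrewise reason as above), equal to the rescaled $t^{2s}\Delta$ for $t\neq 0$ and to $\mathcal{D}_{Y_m}=\sum_{|\alpha|=2s}a_\alpha(m)\partial_\xi^\alpha$ at $t=0$. By \eqref{eq-family-heat-kernel} we have $Q_x=e^{-\mathcal{D}_x}$, so everything reduces to showing that the heat semigroup of a smooth positive elliptic family has a smooth reduced kernel down to $t=0$.

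\emph{The main obstacle.} This last step is where the real work lies, and I would attack it by Duhamel's principle, comparing $e^{-\mathcal{D}_t}$ with the model semigroup $e^{-\mathcal{D}_0}$ via
\[
e^{-\mathcal{D}_t}-e^{-\mathcal{D}_0}=\int_0^1 e^{-(1-\sigma)\mathcal{D}_t}\,(\mathcal{D}_0-\mathcal{D}_t)\,e^{-\sigma\mathcal{D}_0}\,d\sigma .
\]
Since $\mathcal{D}_0-\mathcal{D}_t=O(t)$ with coefficients smooth in $(\xi,t)$, iterating gives a Volterra series whose terms are smooth in $(\xi,\eta,t)$; summing it identifies the smooth extension of $k_Q$ and its value $\mathcal{K}_m(\xi-\eta)$ at $t=0$. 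The difficulty I expect is purely analytic: one must bound the kernels, together with all their $\xi,\eta,t$–derivatives, uniformly as $t\to 0$, and for an operator of order $2s>2$ this requires the sub‑Gaussian off‑diagonal decay of the model kernel $\mathcal{K}_m$ rather than the familiar Gaussian estimates. A more structural alternative is to construct a parametrix for $\lambda-\mathcal{D}$ inside the pseudodifferential calculus of \cite{NistorWeinsteinXu99} on $\mathbb{T}_MV$ and to recover $e^{-\mathcal{D}}=\tfrac{1}{2\pi i}\oint e^{-\lambda}(\lambda-\mathcal{D})^{-1}\,d\lambda$, deducing smoothness of $k_Q$ from that of the resolvent—again the crux being uniformity of the resolvent estimates down to $t=0$.
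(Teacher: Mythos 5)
Your proposal takes essentially the same approach as the paper: equivariance via the constant-family observation of Example~\ref{ex-pair-groupoid}, and smoothness by expressing $t^{2s}\Delta$ in the rescaled deformation coordinates \eqref{eq-rel-tangent-local-coordinate}, where its coefficients become jointly smooth down to $t=0$, with the constant-coefficient model operator $-\sum_{|\alpha|=2s}a_\alpha(m)\,\partial^\alpha/\partial X^\alpha$ as the $t=0$ value. The only divergence is at the final step: the paper simply asserts that the heat kernel of such a smoothly varying elliptic family is smooth in all variables including $t$, whereas you isolate this as the remaining analytic content and sketch a Duhamel/Volterra (or parametrix) argument for it, so on this point your write-up is, if anything, more explicit than the paper's own proof.
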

	
	\begin{proof}
	The equivariance can be shown as in Example~\ref{ex-pair-groupoid}.
	
	The smoothness can be checked locally. Indeed, pick a local coordinate chart $V\supset U\xrightarrow{\varphi} \mathbb{R}^n$ such that $M\cap U$ contains only single point $m\in M$. Then $\mathbb{T}_{M\cap U} U\subset \mathbb{T}_M V$ forms a local coordinate chart whose diffeomorphism into Euclidean space is given by
	\begin{equation}\label{eq-rel-tangent-local-coordinate}
	\mathbb{T}_{M\cap U} U \to \mathbb{R}^{2n+1}:
	\begin{cases}
	(u_1,u_2,t) \mapsto \left(\frac{\varphi(u_1)-\varphi(m)}{t}, \frac{\varphi(u_2)-\varphi(m)}{t}, t\right);\\
	(X_m, Y_m,0) \mapsto \left(\varphi_\ast(X_m),\varphi_\ast(Y_m),0\right).
	\end{cases}
	\end{equation}
	
	By the definition of principal symbol \eqref{principal-symbol}, $\Delta$ has the following local expression on $U$:
	\[
	\Delta= -\sum_{|\alpha|=2s}a_\alpha(u) \frac{\partial^\alpha}{\partial u^\alpha}+ \text{lower order terms}.
	\]
	Under the local coordinate chart \eqref{eq-rel-tangent-local-coordinate}, taken as an operator on $\mathbb{T}_M V_{(v,t)}$, $t^{2s}\Delta$ has following local expression:
	\[
	t^{2s}\Delta= -\sum_{|\alpha|=2s}a_\alpha(tu+m) \frac{\partial^\alpha}{\partial u^\alpha}+ t\cdot \text{lower order terms}.
	\]
	The heat kernel of 
	\[
	\exp\left( -\sum_{|\alpha|=2s}a_\alpha(tu+m) \frac{\partial^\alpha}{\partial u^\alpha}+ t\cdot \text{lower order terms}\right)
	\]
	is smooth in all variables, in particular in $t\in [0,1]$. And its value at $t=0$ is the heat kernel of 
	\[
	\exp\left(-\sum_{|\alpha|=2s}a_\alpha(m) \frac{\partial^\alpha}{\partial X^\alpha}\right).
	\]
	\end{proof}

	\section{Lefschetz fixed point formula}\label{final-calculation}
	In this section we apply the gadget developed above to calculate the supertrace \eqref{Lefschetz-supertrace} as $t\to 0$.
	
	Let $k_P\in C^\infty(\mathbb{T}_M V, \mathbb{E})$ be the reduced kernel of $\left(Q_x,x\in G^{(0)}\right)$ defined in \eqref{eq-family-heat-kernel}. Then we have
	\begin{multline}\label{lines-supertraces}
	\operatorname{Str}\left(Te^{-t\Delta}: C^\infty(V,E)\to C^\infty(V,E)\right)
	= \int_V \operatorname{str}\left(\zeta \cdot k_P(\varphi(v),v,t)\right)t^{-n}d\mu_V(v) \\ = t^{-n}\operatorname{Str}_t\Big((\zeta\cdot k_p)_\varphi\Big) 
	\end{multline}
	where $(\zeta\cdot k_p)_\varphi$ is a smooth section of the bundle $\mathbb{E}\to \mathbb{T}_M V$ such that 
	\[
	\begin{cases}
	(\zeta\cdot k_p)_\varphi(v_1,v_2,t) = \zeta \cdot k_P(\varphi(v_1),v_2,t)\\
	(\zeta\cdot k_p)_\varphi(X_m,Y_m) = \zeta_m \cdot k_p(\varphi_\ast X_m,Y_m).
	\end{cases}
	\]
	By Proposition~\ref{prop-cont-supertraces}, \eqref{lines-supertraces} converges to 
	\begin{align*}
	\operatorname{Str}_0 \Big( (\zeta\cdot k_p)_\varphi \Big)&= \sum_{m\in M}\int_{T_m V}  \operatorname{str}\left(\zeta_m\cdot k(\varphi_\ast X_m,X_m)\right)d\mu_m \\
	&=\sum_{m\in M}\left|\det(\varphi_{\ast,m}-1)\right|^{-1}\operatorname{str}(\zeta_m\cdot \int_{T_mV}k(X_m,0)d\mu_m).\\
	\end{align*}
	By Proposition~\ref{prop-smooth-family-diff}, $k(X_m,0)$ is the heat kernel of $-\sum_{|\alpha|=2s}a_\alpha(x) \frac{\partial^\alpha}{\partial X^\alpha}$. 
	
	\begin{lemma}\label{lem-integral-heat-kernel}
	Let $\alpha=(\alpha_1,\alpha_2,\dots,\alpha_n)$ be a multi-index such that $|\alpha|=2s$ is an even integer. Let
	\[
	K_0=-A \frac{\partial^\alpha}{\partial X^\alpha}
	\]
	be a differential operator on $\mathbb{R}^n$ with coefficient $A$ being positive definite matrix. Let $k_0$ be the heat kernel of $K_0$, then
	\[
	\int_{\mathbb{R}^n} k_0(X,0)dX = 1.
	\]
	\end{lemma}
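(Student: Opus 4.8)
The plan is to recognize the total integral of the heat kernel as the value at $\xi=0$ of the Fourier multiplier defining the semigroup, and to exploit the fact that $K_0$ carries no zeroth–order term. First I would record that $K_0$ has constant coefficients, so the associated heat operator is translation invariant and its Schwartz kernel has the form $k_0(X,Y)=p(X-Y)$ for a single (matrix–valued) function $p$. Consequently
\[
\int_{\R^n}k_0(X,0)\,dX=\int_{\R^n}p(X)\,dX,
\]
and it suffices to compute the total mass of $p$.

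Next I would pass to the Fourier side. With the convention $\hat f(\xi)=\int_{\R^n}f(X)e^{-iX\cdot\xi}\,dX$, the operator $\partial^\alpha/\partial X^\alpha$ becomes multiplication by $(i\xi)^\alpha$, so $K_0$ has symbol $\sigma_{K_0}(\xi)=-A\,(i\xi)^\alpha$ and the heat operator is the Fourier multiplier $e^{-t\sigma_{K_0}(\xi)}$ (with whichever sign convention fixes the heat semigroup). Writing $p$ through Fourier inversion gives $\hat p(\xi)=e^{-t\sigma_{K_0}(\xi)}$, whence the total mass is nothing but this multiplier evaluated at the origin, $\int_{\R^n}p(X)\,dX=\hat p(0)=e^{-t\sigma_{K_0}(0)}$. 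Since $|\alpha|=2s>0$, i.e. $\alpha\neq 0$, the monomial $(i\xi)^\alpha$ vanishes at $\xi=0$, so $\sigma_{K_0}(0)=0$ and the total mass equals $e^{0}=\operatorname{Id}$, independently of the time $t$ and of the sign convention. Conceptually this is just the statement that a constant–coefficient operator with no zeroth–order term annihilates constant functions, so the heat operator fixes them. Here the $1$ on the right–hand side of the statement is read as the identity of $\End(E_v)$, which is what is used in the last display of Section~\ref{final-calculation}.

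The one point needing genuine care — and the step I would treat as the main obstacle — is the analytic justification of interchanging $\int dX$ with the $\xi$–integral, i.e. that $p=k_0(\cdot,0)\in L^1(\R^n)$ and that the Fourier–inversion representation is valid. This is guaranteed by the rapid (Schwartz) decay of the heat kernel, which in the situation of interest follows from the positivity and ellipticity of the full model symbol $\sum_{|\alpha|=2s}a_\alpha\xi^\alpha$ appearing in \eqref{principal-symbol}; once integrability is in hand, Fubini applies and the computation above is rigorous. Feeding the resulting identity $\int_{\R^n}k_0(X,0)\,dX=1$ back into Section~\ref{final-calculation} collapses $\operatorname{str}\!\left(\zeta_m\cdot\int_{T_mV}k(X_m,0)\,d\mu_m\right)$ to $\operatorname{str}(\zeta_m)$, completing the proof of Theorem~\ref{thm-lef-fix}.
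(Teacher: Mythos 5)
Your proof is correct and follows essentially the same route as the paper's: both pass to the Fourier side, represent the heat kernel as the inverse Fourier transform of the multiplier $e^{-A\xi^\alpha}$, and conclude via the identity $\int_{\mathbb{R}^n}\mathscr{F}^{-1}(u)(x)\,dx=u(0)$, i.e.\ by evaluating the multiplier at $\xi=0$ where the symbol vanishes. Your added care about sign conventions, $L^1$ integrability, and the fact that decay really rests on the full elliptic symbol of \eqref{principal-symbol} is a welcome refinement, but it is the same argument.
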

	
	\begin{proof}
	Fourier transformation turn the heat equation
	\begin{equation}\label{eq-heat-eq}
	\frac{\partial u}{\partial t} +A \frac{\partial^\alpha u}{\partial X^\alpha} = 0 
	\end{equation}
	into the ordinary differential equation
	\[
	\mathscr{F}(u_t)+A\xi^\alpha \mathscr{F}(u)=0
	\]
	where $\mathscr{F}$ is the Fourier transformation. Therefore the fundamental solution of \eqref{eq-heat-eq} is
	\[
	\mathscr{F}^{-1}\Big(\exp(-A\xi^\alpha)\Big) 
	\]
	where $\mathscr{F}^{-1}$ is the Fourier inverse transformation.
	Then
	\[
	\int_{\mathbb{R}^n} k_0(X,0)dX = \int e^{-A\xi^\alpha} e^{ix\cdot \xi}d\xi dx = 1.
	\]
	The final equation follows from the simple fact that
	\[
	\int_{\mathbb{R}^n} \mathscr{F}^{-1}(u)(x) dx = u(0).
	\]
	\end{proof}

	By Lemma~\ref{lem-integral-heat-kernel}, we have
	\[
	\int_{T_mV}k(X_m,0)d\mu_m=1.
	\]
	Therefore 
	\[
	\operatorname{Str}_0 \Big( (\zeta\cdot k_p)_\varphi \Big) = \sum_{m\in M}\left|\det(\varphi_{\ast,m}-1)\right|^{-1}\operatorname{str}(\zeta_m).
	\]
	This completes the proof of Theorem~\ref{thm-lef-fix}.
	
\section*{Acknowlegement}
The author would like to thank Professor Weiping Zhang for helpful suggestions on an early draft.

	\bibliography{Refs} 

\def\cprime{$'$} \def\cprime{$'$}
\begin{thebibliography}{NWX99}

\bibitem[AB66]{AtiyahBott66}
M.~F. Atiyah and R.~Bott.
\newblock A {L}efschetz fixed point formula for elliptic differential
  operators.
\newblock {\em Bull. Amer. Math. Soc.}, 72:245--250, 1966.

\bibitem[Bis11]{Bismut11}
Jean-Michel Bismut.
\newblock {\em Hypoelliptic {L}aplacian and orbital integrals}, volume 177 of
  {\em Annals of Mathematics Studies}.
\newblock Princeton University Press, Princeton, NJ, 2011.

\bibitem[Con94]{Connes94}
Alain Connes.
\newblock {\em Noncommutative geometry}.
\newblock Academic Press, Inc., San Diego, CA, 1994.

\bibitem[DS17]{Debord17blowup}
Claire Debord and Georges Skandalis.
\newblock Blowup constructions for lie groupoids and a boutet de monvel type
  calculus, 2017.

\bibitem[Gil84]{Gilkey84}
Peter~B. Gilkey.
\newblock {\em Invariance theory, the heat equation, and the {A}tiyah-{S}inger
  index theorem}, volume~11 of {\em Mathematics Lecture Series}.
\newblock Publish or Perish, Inc., Wilmington, DE, 1984.

\bibitem[Hig10]{Higson10}
Nigel Higson.
\newblock The tangent groupoid and the index theorem.
\newblock In {\em Quanta of {Maths}}, volume~11 of {\em Clay Math. Proc.},
  pages 241--256. Amer. Math. Soc., Providence, RI, 2010.

\bibitem[HY19]{HigsonYi19}
Nigel Higson and Zelin Yi.
\newblock Spinors and the tangent groupoid.
\newblock {\em Doc. Math.}, 24:1677--1720, 2019.

\bibitem[Moh19]{Mohsen19Witten}
Omar Mohsen.
\newblock Witten deformation using lie groupoids, 2019.

\bibitem[NWX99]{NistorWeinsteinXu99}
Victor Nistor, Alan Weinstein, and Ping Xu.
\newblock Pseudodifferential operators on differential groupoids.
\newblock {\em Pacific J. Math.}, 189(1):117--152, 1999.

\end{thebibliography}
	\bibliographystyle{alpha}
	
	\noindent {\small  Chern Institute of Mathematics, Nankai University, Tianjin, P. R. China 300071.}
	
	
	\noindent{\small Email: zelin@nankai.edu.cn}

\end{document}